\providecommand{\U}[1]{\protect\rule{.1in}{.1in}}
\providecommand{\U}[1]{\protect\rule{.1in}{.1in}}
\newtheorem{theorem}{Theorem}
\newtheorem{corollary}[theorem]{Corollary}
\newtheorem{lemma}[theorem]{Lemma}
\newtheorem{proposition}[theorem]{Proposition}
\newenvironment{proof}[1][Proof]{\noindent\textbf{#1.} }{\ \rule{0.5em}{0.5em}}
\begin{document}

\author{G. Steinbrecher, N. Pometescu \\Department of Physics, University of Craiova, \\Str. A. I. Cuza, No.13, 200585 - Craiova, Romania}
\title{Simplified convergence proof of B\'{e}zier finite elements on D-dimensional simplex}
\date{}
\maketitle

\begin{abstract}
By using a general formalism, we expose a simplified proof of the convergence
of the B\'{e}zier polynomials attached to a continuous function defined in
arbitrary dimensional simplex. We obtain an error estimate that contains the
error in approximating by exponential functions. Our new proof is based on the
topological Stone-Weierstrass theorem.

\end{abstract}

\section{Introduction \ }

For\ the numerical simulation of the wall touching kink modes (WTKM) in the
thin wall approximation \cite{AtanasiuZaharov1} one of the problem is to
extend the existing code, designed for the smooth tokamak wall using
triangular finite elements, to the case when to the tokamak wall is attached a
limiter. In order to\ reduce the modification in the previously elaborated
programme and input data, in the generic cases we are forced to solve the
problem of nonconforming finite elements. Despite the\ construction of the
finite elements on the limiter alone does not pose complex problems, on the
contact line on the tokamak wall special problems appears due to nonconforming
position of the finite element simplexes. In this work we study the
convergence rate of B\'{e}zier simplicial finite elements,\ in order to
elaborate the mathematical foundations for the forthcoming work intended to
elaborate codes with non conforming finite elements.\ We give a new, simplest
proof of the convergence of Bernstein polynomials toward a $D$ variable
continuos function defined on\ a $D$ dimensional simplex and estimate the
speed of convergence.

\section{ \ \ The B\'{e}zier polynomials on D dimensional simplex}

Our goal is to give a new and simple proof that the B\'{e}zier polynomials
attached to a continuos function defined on a simplicial finite element, that
is an $D$ dimensional simplex, is really an approximant of the function and to
estimate the error. We stress that we are not intend to give a new proof of
some extension of the Weierstrass theorem on simplexes, rather we are
interested in better understanding the approximation mechanism in order to
handle the more complex geometries that appears in the case of nonconforming
B\'{e}zier finite element analysis. \ 

Let a compact set $\mathbf{K}\subset
\mathbb{R}^{D}$. Here $
\mathbb{R}^{D}$ is considered with standard real vector space structure and scalar
product "$\cdot$".\ We denote by $C(\mathbf{K})$ the Banach algebra of real
continuos functions on $K$ with the topology given by the norm
\begin{equation}
\left\Vert f\right\Vert :=\underset{\mathbf{x\in K}}{\sup}\left\vert
f(\mathbf{x})\right\vert ;~f\in C(\mathbf{K})\label{supnormdefinitiongeneral}%
\end{equation}
\textbf{Remark}: Recall the Stone-Weierstrass theorem \cite{GeneralTopology}
that will be used: \ Let $\mathbf{K}$ a compact topological space and
$A\subset C(\mathbf{K})$ a subalgebra of$\ C(\mathbf{K})$ with containing
constant function. If the subalgebra $A$ contains functions that distinguish
every pair of points, that is for each pair $\mathbf{x}_{1}\mathbf{,x}_{2}%
\in\mathbf{K}$ there exits $f\in A$ such that $f(\mathbf{x}_{1})\neq
f(\mathbf{x}_{2})$ then $A$ is dense in $C(\mathbf{K})$ in the norm topology.
In the our setting $\mathbf{K}$ is a simplex from.

\subsection{ \ \ Definitions and notations}

Consider a\ general $D$ dimensional closed simplex \textbf{$T$}%
\begin{equation}
\mathbf{x}_{i}\in\mathbf{T}\subset
\mathbb{R}^{D},1\leq i\leq D\label{b0}%
\end{equation}
Denote by $C(\mathbf{T})$ the normed space of continuos functions on
$\mathbf{T}$ with uniform convergence norm
\begin{equation}
\left\Vert f\right\Vert :=\underset{\mathbf{x\in T}}{\sup}\left\vert
f(\mathbf{x})\right\vert ;~f\in C(\mathbf{T})\label{normdefinition}%
\end{equation}
For a given point $\mathbf{x}\in\mathbf{T}$ we denote by $s_{i}(\mathbf{x}%
),~i=\overline{0,D}$ \ its barycentric coordinates:
\begin{align}
\mathbf{x} &  =\sum\limits_{i=0}^{D}s_{i}(\mathbf{x})\mathbf{x}_{i}%
\label{b1}\\
\sum\limits_{i=0}^{D}s_{i}(\mathbf{x}) &  =1;~s_{i}(\mathbf{x})\geq
0,~i=\overline{0,D}\label{b2}%
\end{align}
Consider only the case when the simplex is non degenerated. The correspondence
defined in Eqs. (\ref{b1}, \ref{b2}),%

\[
\mathbb{R}^{D}\ni\mathbf{x}\rightarrow(s_{0}(\mathbf{x}),\ s_{2}%
(\mathbf{x}),\cdots~,s_{D}(\mathbf{x})):=S(\mathbf{x})\in\mathbb{R}^{D+1}%
\]
transform the simplex \textbf{$T$} in\ \textbf{$T_{0}$,} the standard $D$
dimensional simplex from~$
\mathbb{R}^{D+1}$,~that is the convex hull of the unit vectors from \ $\mathbb{R}^{D+1}%
$. The map $S$~is one to one; denote its inverse by $R$:%
\begin{align}
\mathbb{R}^{D+1}  &  \supset\mathbf{T}_{0}\ni(t_{0},t_{1},\cdots,t_{D})\overset
{R}{\rightarrow}\mathbf{x}=\sum\limits_{i=0}^{D}t_{i}\mathbf{x}_{i}%
:=R(t_{0},t_{1},\cdots,t_{D})\in\mathbf{T}\label{b4}\\
\sum\limits_{i=0}^{D}t_{i}  &  =1;~t_{i}\geq0,~i=\overline{0,D} \label{b5}%
\end{align}

The one dimensional analogue of the Eq.(\ref{b2}) was the key starting point
in the probabilistic proof of the classical Weierstrass theorem by S.
Bernstein in the case of functions of one variable \cite{Sinai}, \cite{Renyi}.

In analogy to the single variable case of Bernstein polynomial attached to the
1-simplex $[0,1]$,
\[
B_{k}^{n}(s)=\frac{n!}{k!(n-k)!}s^{k}(1-s)^{n-k};~0\leq k\leq
n~;~k,n~\operatorname{integer}~
\]
the Bernstein-B\'{e}zier polynomials \cite{Bezier}, \cite{FarinCAGD},
\cite{Farin} attached to the $D$ dimensional simplex $\mathbf{T}$ from
(\ref{b0}) are defined as follows. First, we use the notation $\mathbf{k}$ for
the sequence of $D+1$ integers $(k_{0},k_{1},\cdots,k_{D})\mathbf{\in}
\mathbb{N}
^{\times(D+1)}$ and denote
\[
\left\vert \mathbf{k}\right\vert :=\sum\limits_{j=0}^{D}k_{j}%
\]
We use the standard notation for the Newton multinomial coefficients with
$\mathbf{k\in}
\mathbb{N}^{\times(D+1)}$
\begin{align*}
\binom{n}{k_{0},k_{1},\cdots,k_{D}}  &  =\binom{n}{\mathbf{k}}:=\frac
{n!}{\prod\limits_{j=0}^{D}k_{j}!}\\
\left\vert \mathbf{k}\right\vert  &  =n
\end{align*}
With these notations the B\'{e}zier polynomials of order $\left\vert
\mathbf{k}\right\vert =n$, defined on the simplex $\mathbf{T}\subset
\mathbb{R}^{D}$, indexed by $\mathbf{k}$, are defined as follows
\begin{align}
B_{\mathbf{k}}^{n}(\mathbf{x})  &  :=\binom{n}{\mathbf{k}}\prod\limits_{j=0}%
^{D}\left[  s_{j}(\mathbf{x})\right]  ^{k_{j}}\label{b6}\\
\left\vert \mathbf{k}\right\vert  &  =n;~\ 0\leq k_{j}\leq n;~j=\overline
{0,D};~k_{j}\in
\mathbb{N}
\label{b7}%
\end{align}

In the following, we will denote by $M_{n}$, the set whose elements are
sequence of integers with property:
\begin{equation}
\mathbb{\ }
\mathbb{N}^{\times(D+1)}\supset M_{n}:=\{\mathbf{k}|\mathbf{k}\in
\mathbb{N}
^{\times(D+1)};~\left\vert \mathbf{k}\right\vert =n\} \label{b7.1}%
\end{equation}
This subset $M_{n}$ of sequences appears in the Newton multinomial formula:%
\begin{equation}
\left(  \sum\limits_{j=0}^{D}a_{j}\right)  ^{n}=\sum\limits_{\mathbf{k}\in
M_{n}}\binom{n}{\mathbf{k}}\prod\limits_{j=0}^{D}\left[  a_{j}\right]
^{k_{j}} \label{Newton}%
\end{equation}

The set of Bernstein-B\'{e}zier polynomials of given order $n$, in the case of
a real continuous function of $D$ real variables defined on the simplex
$\mathbf{T}$
\[
\mathbb{R}^{D}\supset\mathbf{T\ni y}\rightarrow f(\mathbf{y})\in\mathbb{R}%
\]
generate a polynomial $B_{n}(f;\mathbf{y})$ in $D$ real variables \textbf{$y$%
}$=(y_{1},y_{2},\cdots y_{D})$, that is, by anticipating, an uniform
polynomial approximant of the function $f($\textbf{$y$}$)$%
\begin{align}
B_{n}(f;\mathbf{y})  &  :=\sum\limits_{\mathbf{k}\in M_{n}}f\left[  R\left(
\frac{k_{1}}{n},\frac{k_{2}}{n},\cdots,\frac{k_{D}}{n}\right)  \right]
B_{\mathbf{k}}^{n}(\mathbf{y})=\label{b8}\\
&  =\sum\limits_{\mathbf{k}\in M_{n}}f\left(  \sum\limits_{j=1}^{D}\frac
{k_{j}}{n}\mathbf{x}_{j}\right)  B_{\mathbf{k}}^{n}(\mathbf{y}) \label{b8.1}%
\end{align}
The previous equations (\ref{b8}), (\ref{b8.1}) define a sequence linear
operators $\widehat{B}_{n}f:=B_{n}(f;\mathbf{.})$, indexed by $n$, on the
space $C(\mathbf{T})$
\begin{equation}
\left(  \widehat{B}_{n}f\right)  (\mathbf{x})=B_{n}(f;\mathbf{x}%
)\ \label{firstDefBhat}%
\end{equation}

The set of points,%
\begin{equation}
C_{n}:=\left\{  R\left(  \frac{k_{1}}{n},\frac{k_{2}}{n},\cdots,\frac{k_{D}%
}{n}\right)  |\mathbf{k}\in M_{n}\right\}  \subset\mathbf{T} \label{b9}%
\end{equation}
are called the set of control points on the simplex $\mathbf{T}$. We have%
\begin{equation}
C_{n}=R\left(  M_{n}/n\right)  \label{b10}%
\end{equation}

\subsection{Convergence proof and error estimate for exponential functions}

We denote by $C(\mathbf{K})$ space of continuos functions on a compact set
$\mathbf{K}$\ in a finite dimensional real vector space, with the uniform
convergence topology and denote by $E(\mathbf{K})$ the subspace of
$C(\mathbf{K})$ generated by all finite linear combinations of the functions%
\begin{equation}
\exp\left(  \mathbf{a}\cdot\mathbf{x}\right)  ;~\mathbf{a}\in
\mathbb{R}^{D}\label{b11}
\end{equation}
We start with a proposition that simplifies the proof.

\begin{proposition}
\label{mark proposition separation}The subspace $E(\mathbf{K})$ contains
functions that distinguish \cite{GeneralTopology} every pair of points
\ $\mathbf{b}_{1},\mathbf{b}_{2}\in\mathbf{K}.$
\end{proposition}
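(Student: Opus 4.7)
My plan is to exhibit, for each pair of distinct points $\mathbf{b}_1, \mathbf{b}_2 \in \mathbf{K}$, a single exponential function of the form \eqref{b11} that separates them. Since $E(\mathbf{K})$ is defined as the span of all such exponentials, a single separating exponential already lies in $E(\mathbf{K})$, and no nontrivial linear combination is needed.

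The construction goes as follows. Given $\mathbf{b}_1 \neq \mathbf{b}_2$ in $\mathbf{K} \subset \mathbb{R}^D$, the difference $\mathbf{b}_1 - \mathbf{b}_2$ is a nonzero vector. I would choose $\mathbf{a} \in \mathbb{R}^D$ so that $\mathbf{a}\cdot(\mathbf{b}_1 - \mathbf{b}_2) \neq 0$; the simplest choice is $\mathbf{a} = \mathbf{b}_1 - \mathbf{b}_2$ itself, which gives $\mathbf{a}\cdot(\mathbf{b}_1 - \mathbf{b}_2) = \|\mathbf{b}_1 - \mathbf{b}_2\|^2 > 0$. Alternatively, one may simply pick a coordinate index $i$ where $\mathbf{b}_1$ and $\mathbf{b}_2$ differ and take $\mathbf{a} = \mathbf{e}_i$. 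In either case the resulting real numbers $\mathbf{a}\cdot\mathbf{b}_1$ and $\mathbf{a}\cdot\mathbf{b}_2$ are distinct.

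To conclude, I would invoke the strict monotonicity (equivalently, injectivity) of $t \mapsto e^t$ on $\mathbb{R}$: from $\mathbf{a}\cdot\mathbf{b}_1 \neq \mathbf{a}\cdot\mathbf{b}_2$ it follows that $\exp(\mathbf{a}\cdot\mathbf{b}_1) \neq \exp(\mathbf{a}\cdot\mathbf{b}_2)$. Hence the function $f(\mathbf{x}) := \exp(\mathbf{a}\cdot\mathbf{x})$ belongs to $E(\mathbf{K})$ and satisfies $f(\mathbf{b}_1) \neq f(\mathbf{b}_2)$, which is exactly the distinguishing property required.

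There is really no main obstacle here; the statement is a one-line observation combining the nondegeneracy of the Euclidean inner product (any nonzero vector is detected by some linear functional) with the injectivity of the real exponential. The only thing to be careful about is that we are working over $\mathbb{R}^D$ with real coefficients $\mathbf{a}$, so we do not need to worry about the periodicity of the complex exponential; the real case is immediate.
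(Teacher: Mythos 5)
Your proposal is correct and follows essentially the same route as the paper: both take $\mathbf{a}$ to be the difference of the two points and use the injectivity (monotonicity) of the real exponential, the paper merely adding an inessential shift $\mathbf{x}\mapsto\mathbf{x}-\mathbf{b}_1$ in the exponent, which only rescales the generator by a constant. No further comment is needed.
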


\begin{proof}
We have to prove that there exists a function $g\in E(\mathbf{K})$ such that
$g(\mathbf{b}_{1})\neq g(\mathbf{b}_{2})$. Consider
\[
\mathbf{a=b}_{2}-\mathbf{b}_{1}\
\]
The function $g($\textbf{$x$}$)$ defined by%
\[
g(\mathbf{x}):=\exp\left[  \left(  \mathbf{b}_{2}-\mathbf{b}_{1}\right)
\cdot(\mathbf{x-b}_{1})\right]
\]
is also contained in $E(\mathbf{K})$ and $g(\mathbf{b}_{2})>g(\mathbf{b}%
_{1})\ $that completes the proof.
\end{proof}

Starting from this result we can justify our main lemma, which states that for
every $f\in C(\mathbf{K})$ and $\varepsilon>0$ there exists an exponential
polynomial of the form
\begin{equation}
P_{\varepsilon}(\mathbf{x})=\sum\limits_{i=1}^{N(\varepsilon)}c_{i}%
^{\varepsilon}\exp\left(  \mathbf{a}_{i}^{\varepsilon}\cdot\mathbf{x}\right)
\label{b11.1}%
\end{equation}
such that
\begin{equation}
~\left\Vert f-P\right\Vert :=\underset{\mathbf{x\in K}}{\sup}\left\vert
f(\mathbf{x})-P(\mathbf{x})\right\vert \leq\frac{\varepsilon}{2} \label{b11.2}%
\end{equation}
where we used the notation Eq.(\ref{normdefinition}). This property can be
reformulated as follows:

\begin{lemma}
\label{marker lemma density exponents}The subspace $E(\mathbf{K})$ is dense in
$C(\mathbf{K})$ in the uniform convergence topology on the compact set
$\mathbf{K}$.
\end{lemma}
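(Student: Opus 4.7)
The plan is to apply the Stone--Weierstrass theorem, as recalled in the remark early in the paper, directly to $E(\mathbf{K})$. To do so I need to verify three hypotheses: that $E(\mathbf{K})$ is a subalgebra of $C(\mathbf{K})$, that it contains the constants, and that it separates points of $\mathbf{K}$.

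First I would observe that $E(\mathbf{K})$ contains the constant functions, since the choice $\mathbf{a}=0$ in (\ref{b11}) gives $\exp(0\cdot\mathbf{x})\equiv 1$, so every constant is a scalar multiple of an element of $E(\mathbf{K})$. Next I would check the algebra structure: $E(\mathbf{K})$ is a linear subspace by definition, and closure under multiplication reduces, by bilinearity, to the identity
\begin{equation*}
\exp(\mathbf{a}\cdot\mathbf{x})\,\exp(\mathbf{b}\cdot\mathbf{x})=\exp\bigl((\mathbf{a}+\mathbf{b})\cdot\mathbf{x}\bigr),
\end{equation*}
which shows the product of two generators is again a generator, so the product of two finite linear combinations is again a finite linear combination of exponentials of the form (\ref{b11}).

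The point-separation property is the only nontrivial ingredient, but it has just been established as Proposition \ref{mark proposition separation}: for any distinct $\mathbf{b}_1,\mathbf{b}_2\in\mathbf{K}$ the explicit function $\exp\bigl((\mathbf{b}_2-\mathbf{b}_1)\cdot(\mathbf{x}-\mathbf{b}_1)\bigr)$, which lies in $E(\mathbf{K})$ (being a scalar multiple of $\exp((\mathbf{b}_2-\mathbf{b}_1)\cdot\mathbf{x})$), takes different values at $\mathbf{b}_1$ and $\mathbf{b}_2$.

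Having verified all three hypotheses, the Stone--Weierstrass theorem stated in the remark yields that $E(\mathbf{K})$ is dense in $C(\mathbf{K})$ in the uniform norm, which is exactly the lemma. The only step I expect to require any care is the algebra closure, and even there the only thing to say is the exponent-addition identity above; everything else is immediate from the definition of $E(\mathbf{K})$ and the previously proved proposition.
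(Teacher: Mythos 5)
Your proof is correct and follows essentially the same route as the paper: both verify the Stone--Weierstrass hypotheses for $E(\mathbf{K})$ (subalgebra with identity, point separation via Proposition \ref{mark proposition separation}) and then invoke the theorem. You simply spell out the closure under multiplication and the presence of constants in slightly more detail than the paper does.
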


\begin{proof}
The subspace $E(\mathbf{K})$ is closed under multiplication, contains identity
element so it is a Banach subalgebra of $C(\mathbf{K})$ with identity, and
according to the previous Proposition \ref{mark proposition separation},
distinguishes all of the points on the \textbf{compact set} $\mathbf{K}$. So,
according to Stone-Weierstrass theorem \ \cite{GeneralTopology},
$E(\mathbf{K})$ is dense in $C(\mathbf{K})$.
\end{proof}

Now, according with the previous Lemma, in order to prove that $B_{f}^{n}%
($\textbf{$x$}$)\overset{n\rightarrow\infty}{\rightarrow}f($\textbf{$x$}$)$,
it is sufficient to prove the convergence on the generators $\exp\left(
\mathbf{a}\cdot\mathbf{x}\right)  $ of the space $E(\mathbf{K})$. Denote%
\begin{equation}
e_{\mathbf{a},n}(\mathbf{x}):=B^{n}(f;\mathbf{x});~f(\mathbf{x})\equiv
\exp\left(  \mathbf{a}\cdot\mathbf{x}\right)  \label{12}%
\end{equation}
For the sake of clarity we decompose the proofs in several steps.

\subsubsection{The B\'{e}zier polynomial for $\exp\left(  \mathbf{a}%
\cdot\mathbf{x}\right)  $ for $\ D$ dimensional simplex, error estimate}

\begin{proposition}
\label{marker Prop exact formula exp funct}The B\'{e}zier polynomial
associated to $D$ dimensional simplex for exponential function from Eq.
(\ref{12}) can be expressed as follows%
\begin{equation}
e_{\mathbf{a},n}(\mathbf{x})=\left[  \sum\limits_{j=1}^{D}s_{j}(\mathbf{x}%
)\exp\left(  \frac{\mathbf{a}\cdot\mathbf{x}_{j}}{n}\right)  \right]
^{n}\label{13}%
\end{equation}

\end{proposition}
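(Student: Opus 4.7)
The plan is to evaluate the definition of the Bézier polynomial (\ref{b8}--\ref{b8.1}) on the specific function $f(\mathbf{x}) = \exp(\mathbf{a}\cdot\mathbf{x})$ and then recognize a multinomial expansion, so the whole argument reduces to a direct algebraic manipulation.

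First I would substitute $f(\mathbf{y}) = \exp(\mathbf{a}\cdot\mathbf{y})$ into (\ref{b8.1}), obtaining
$$e_{\mathbf{a},n}(\mathbf{x}) = \sum_{\mathbf{k}\in M_n} \exp\!\left(\mathbf{a}\cdot\sum_{j=0}^{D}\frac{k_j}{n}\mathbf{x}_j\right) B_{\mathbf{k}}^{n}(\mathbf{x}).$$
The key observation is that the exponential of a sum factorises as a product, so
$$\exp\!\left(\sum_{j=0}^{D}\frac{k_j\,\mathbf{a}\cdot\mathbf{x}_j}{n}\right) = \prod_{j=0}^{D}\exp\!\left(\frac{\mathbf{a}\cdot\mathbf{x}_j}{n}\right)^{k_j}.$$
Combining this with the explicit form of $B_{\mathbf{k}}^{n}(\mathbf{x})$ from (\ref{b6}) gathers all the $k_j$-th powers into a single product, yielding
$$e_{\mathbf{a},n}(\mathbf{x}) = \sum_{\mathbf{k}\in M_n}\binom{n}{\mathbf{k}}\prod_{j=0}^{D}\left[s_j(\mathbf{x})\,\exp\!\left(\frac{\mathbf{a}\cdot\mathbf{x}_j}{n}\right)\right]^{k_j}.$$

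Finally I would apply the Newton multinomial formula (\ref{Newton}) with $a_j := s_j(\mathbf{x})\exp(\mathbf{a}\cdot\mathbf{x}_j/n)$, which collapses the sum over $M_n$ into the $n$-th power of $\sum_j a_j$ and gives exactly (\ref{13}).

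There is no real obstacle here: the statement is a bookkeeping identity. The only point that requires a moment of care is making sure the index set $M_n$ in (\ref{b7.1}) is precisely the set over which the multinomial formula (\ref{Newton}) is stated, so that the application of that formula is literal. Everything else is substitution, factorisation of the exponential, and regrouping of products.
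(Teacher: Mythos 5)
Your proposal is correct and follows essentially the same route as the paper's own proof: substitute the exponential into (\ref{b8.1}), factor $\exp$ of the sum into a product of $k_j$-th powers, merge with the product in (\ref{b6}), and collapse via the multinomial formula (\ref{Newton}). If anything, your consistent use of the index range $j=\overline{0,D}$ is cleaner than the paper's, which switches between $j=0$ and $j=1$ as the lower summation limit.
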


\begin{proof}
We use here a shorter notation: $s_{j}($\textbf{$x$}$)$ denote simply as
$s_{j}$.~From Eqs. (\ref{b8.1}, \ref{b6}, \ref{b11} \ref{b7}) and Newton
formula Eq.(\ref{Newton}) results
\begin{align}
e_{\mathbf{a},n}(\mathbf{x}) &  =\sum\limits_{\mathbf{k}\in M_{n}}\exp\left[
a\cdot\left(  \sum\limits_{j=1}^{D}\frac{k_{j}}{n}\mathbf{x}_{j}\right)
\right]  B_{i,j,k}^{n}(\mathbf{x})\label{14}\\
&  =\sum\limits_{\mathbf{k}\in M_{n}}\prod\limits_{j=0}^{D}\left[  \exp\left(
\frac{\mathbf{a}\cdot\mathbf{x}_{j}}{n}\right)  \right]  ^{k_{j}}\ \binom
{n}{\mathbf{k}}\prod\limits_{j=0}^{D}\left[  s_{j}\right]  ^{k_{j}}\nonumber\\
&  =\left[  \sum\limits_{j=1}^{D}s_{j}\exp\left(  \frac{\mathbf{a}%
\cdot\mathbf{x}_{j}}{n}\right)  \right]  ^{n}\nonumber
\end{align}
where was used the definition (\ref{b7.1}) and the multinomial Newton formula.
\end{proof}

By using Eq. (\ref{13}) we have the following

\begin{proposition}
\label{marker prop exp approximation}In the limit of large $n$ we have the
following convergence result of the B\'{e}zier polynomial $e_{\mathbf{a},n}%
($\textbf{$x$}$)$ on the simplex $\mathbf{T}$
\begin{equation}
\left\vert e_{\mathbf{a},n}(\mathbf{x})-\exp(a\cdot\mathbf{x})\right\vert
\leq\left[  \frac{K}{n}+\mathcal{O}\left(  \frac{1}{n^{2}}\right)  \right]
\exp(a\cdot\mathbf{x}) \label{asimpt4}%
\end{equation}
where $K$ is a constant, independent of $n$ and \textbf{$x$}.
\end{proposition}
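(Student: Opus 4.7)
The plan is to start from the exact representation
\[
e_{\mathbf{a},n}(\mathbf{x})=\left[\sum_{j=0}^{D}s_{j}(\mathbf{x})\exp\!\left(\tfrac{\mathbf{a}\cdot\mathbf{x}_{j}}{n}\right)\right]^{n}
\]
established in the previous Proposition, and to Taylor-expand everything carefully in powers of $1/n$, exploiting the two identities $\sum_{j}s_{j}(\mathbf{x})=1$ and $\sum_{j}s_{j}(\mathbf{x})\mathbf{x}_{j}=\mathbf{x}$ that are built into the barycentric coordinates.

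First I would expand the inner exponential: for each $j$,
\[
\exp\!\left(\tfrac{\mathbf{a}\cdot\mathbf{x}_{j}}{n}\right)=1+\tfrac{\mathbf{a}\cdot\mathbf{x}_{j}}{n}+\tfrac{(\mathbf{a}\cdot\mathbf{x}_{j})^{2}}{2n^{2}}+r_{j,n},
\]
where $|r_{j,n}|\leq C_{1}/n^{3}$ uniformly in $j$, with $C_{1}$ depending only on $\mathbf{a}$ and on the (compact) simplex. Multiplying by $s_{j}(\mathbf{x})\geq 0$ and summing, and using the two barycentric identities, I obtain
\[
\sum_{j=0}^{D}s_{j}(\mathbf{x})\exp\!\left(\tfrac{\mathbf{a}\cdot\mathbf{x}_{j}}{n}\right)=1+\tfrac{\mathbf{a}\cdot\mathbf{x}}{n}+\tfrac{1}{2n^{2}}\sum_{j}s_{j}(\mathbf{x})(\mathbf{a}\cdot\mathbf{x}_{j})^{2}+\mathcal{O}(1/n^{3}),
\]
with implicit constants uniform on $\mathbf{T}$. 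Call the right-hand side $1+y_{n}(\mathbf{x})$; then $|y_{n}(\mathbf{x})|\leq C_{2}/n$ uniformly.

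Next I would take the $n$-th power by passing to the logarithm. For $n$ large enough that $|y_{n}|<1/2$ uniformly on $\mathbf{T}$, the series $\log(1+y_{n})=y_{n}-y_{n}^{2}/2+\mathcal{O}(y_{n}^{3})$ converges uniformly, and
\[
n\log(1+y_{n}(\mathbf{x}))=\mathbf{a}\cdot\mathbf{x}+\tfrac{1}{2n}\Bigl[\sum_{j}s_{j}(\mathbf{x})(\mathbf{a}\cdot\mathbf{x}_{j})^{2}-(\mathbf{a}\cdot\mathbf{x})^{2}\Bigr]+\mathcal{O}(1/n^{2}).
\]
The bracket is nonnegative (it is the variance of the random variable taking values $\mathbf{a}\cdot\mathbf{x}_{j}$ with probabilities $s_{j}(\mathbf{x})$) and is bounded on $\mathbf{T}$. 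Exponentiating and subtracting $\exp(\mathbf{a}\cdot\mathbf{x})$ gives
\[
e_{\mathbf{a},n}(\mathbf{x})-\exp(\mathbf{a}\cdot\mathbf{x})=\exp(\mathbf{a}\cdot\mathbf{x})\Bigl[\tfrac{K(\mathbf{x})}{n}+\mathcal{O}(1/n^{2})\Bigr],
\]
which yields the claimed estimate upon taking $K:=\sup_{\mathbf{x}\in\mathbf{T}}K(\mathbf{x})<\infty$.

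The only real subtlety, and therefore the main point requiring care, is uniform control of the two Taylor tails: the expansion of the inner $\exp$ and, more delicately, of $\log(1+y_{n})$ when raising to the $n$-th power. Both are guaranteed by compactness of $\mathbf{T}$, which makes $\mathbf{a}\cdot\mathbf{x}_{j}$ and $\mathbf{a}\cdot\mathbf{x}$ bounded and ensures that all $\mathcal{O}(\cdot)$ constants can be chosen independent of $\mathbf{x}$ (they may depend on $\mathbf{a}$ and on the simplex, as the statement allows). Once uniform validity of the expansions is in hand, the rest is bookkeeping of powers of $1/n$.
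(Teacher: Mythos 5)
Your proposal is correct and follows essentially the same route as the paper's own proof: expand $\exp(\mathbf{a}\cdot\mathbf{x}_{j}/n)$ by Taylor's formula, use the barycentric identities $\sum_{j}s_{j}=1$ and $\sum_{j}s_{j}\mathbf{x}_{j}=\mathbf{x}$ to write the inner sum as $1+\mathbf{a}\cdot\mathbf{x}/n+r_{n}(\mathbf{x})$, and pass to the logarithm to control the $n$-th power uniformly on the compact simplex. Your version is in fact slightly sharper, since you carry the expansion one order further and identify the leading error coefficient as the (nonnegative, bounded) variance $\frac{1}{2}\bigl[\sum_{j}s_{j}(\mathbf{a}\cdot\mathbf{x}_{j})^{2}-(\mathbf{a}\cdot\mathbf{x})^{2}\bigr]$, whereas the paper only bounds it by a constant $K$.
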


\begin{proof}
By separating an $\mathcal{O}(1/n^{2})$ term
\[
\exp\left(  \frac{\mathbf{a}\cdot\mathbf{x}_{j}}{n}\right)  =1+\frac
{\mathbf{a}\cdot\mathbf{x}_{j}}{n}+\left[  \exp\left(  \frac{\mathbf{a}%
\cdot\mathbf{x}_{j}}{n}\right)  -1-\frac{\mathbf{a}\cdot\mathbf{x}_{j}}%
{n}\right]
\]
with the use of Eq.(\ref{b2}), the term in the right hand side from
Eq.(\ref{13}) we rewrite as follows%
\begin{align}
\sum\limits_{j=1}^{D}s_{j}(\mathbf{x})\exp\left(  \frac{\mathbf{a}%
\cdot\mathbf{x}_{j}}{n}\right)   &  =1+\sum\limits_{j=1}^{D}s_{j}%
(\mathbf{x})\frac{\mathbf{a}\cdot\mathbf{x}_{j}}{n}+r_{n}(\mathbf{x}%
)\nonumber\\
&  =1+\frac{\mathbf{a}\cdot\mathbf{x}}{n}+r_{n}(\mathbf{x})\label{asimpt6}%
\end{align}
where in the last equality we used Eq.(\ref{b1}) and we denoted the residual
$O(1/n^{2})$ term as
\begin{equation}
r_{n}(\mathbf{x}):=\sum\limits_{j=1}^{D}s_{j}(\mathbf{x})\exp\left(
\frac{\mathbf{a}\cdot\mathbf{x}_{j}}{n}\right)  -1-\frac{\mathbf{a}%
\cdot\mathbf{x}}{n}\label{asimpt7}%
\end{equation}
For large $n$, by using the remainder formula for Taylor series we have the
inequality
\begin{align}
\left\vert r_{n}(\mathbf{x})\right\vert  &  \leq\frac{K_{1,n}}{n^{2}%
}\label{asimpt8}\\
K_{1,n} &  =\frac{1}{2}\sum\limits_{j=1}^{D}(\mathbf{a}\cdot\mathbf{x}%
_{j})^{2}\exp\left(  \frac{\mathbf{a}\cdot\mathbf{x}_{j}}{n}\right)
<C\label{asimpt9}%
\end{align}
where the constant $C$ can be optimized by suitable coordinate change. From
Eqs. (\ref{13}, \ref{asimpt6}) results%
\begin{equation}
e_{\mathbf{a},n}(\mathbf{x})=\left[  1+\frac{\mathbf{a}\cdot\mathbf{x}}%
{n}+r_{n}(\mathbf{x})\right]  ^{n}\label{asimpt10}%
\end{equation}
In order to find the speed of convergence by using Eqs. (\ref{asimpt8},
\ref{asimpt9}) we compute%
\begin{align}
\left\vert \log\frac{e_{\mathbf{a},n}(\mathbf{x})}{\exp(\mathbf{a}%
\cdot\mathbf{x})}\right\vert  &  <\frac{K}{n}+\mathcal{O}\left(  \frac
{1}{n^{2}}\right)  \label{asimpt11}\\
K &  =C+\frac{1}{2}\underset{\mathbf{x}\in\mathbf{T}}{\max}(\mathbf{a}%
\cdot\mathbf{x})\label{asimpt12}%
\end{align}
From Eq.(\ref{asimpt11}) we obtain the relative error bound%
\begin{equation}
\left\vert \frac{e_{\mathbf{a},n}(\mathbf{x})-\exp(\mathbf{a}\cdot\mathbf{x}%
)}{\exp(\mathbf{a}\cdot\mathbf{x})}\right\vert <\frac{K}{n}+\mathcal{O}\left(
\frac{1}{n^{2}}\right)  \label{asimpt13}%
\end{equation}
that completes the proof.

\begin{corollary}
\label{markerCorolaryUnifConvergence}In the special case when $f(\mathbf{x}%
)=\exp(\mathbf{a}\cdot\mathbf{x})$ \ we have the uniform convergence in the
simplex $\mathbf{T}$%
\[
\left\Vert B_{n}(f;\mathbf{.})-f(\mathbf{.})\right\Vert :=\underset
{\mathbf{x\in T}}{\sup}\left\vert B_{n}(f;\mathbf{x})-f(\mathbf{x})\right\vert
\leq\left[  \frac{K}{n}+\mathcal{O}\left(  \frac{1}{n^{2}}\right)  \right]
\underset{\mathbf{x\in T}}{\sup}\exp(\mathbf{a}\cdot\mathbf{x})
\]
or by notation (\ref{firstDefBhat})
\[
\left\Vert \widehat{B}_{n}f-f\right\Vert \leq\left[  \frac{K}{n}%
+\mathcal{O}\left(  \frac{1}{n^{2}}\right)  \right]  \left\Vert f\right\Vert
\]
For all \thinspace exponential polynomials $P(\mathbf{x}):=\sum\limits_{i=1}%
^{N}c_{i}\exp(\mathbf{a}_{i}\cdot\mathbf{x})$ and every $\varepsilon$ there
exists an $N(P,\varepsilon)$ such that for all $n\geq N,$ \
\begin{equation}
\left\Vert \widehat{B}_{n}P-P\right\Vert \leq\frac{\varepsilon}{4}%
\label{expoaproxineg}%
\end{equation}

\end{corollary}
\end{proof}

\subsection{Convergence proof, and error estimate, general case.}

By \ Eqs. (\ref{b8}, \ref{b8.1}) we defined a family of linear operators
$B^{n}(f;\mathbf{y})$ on the space of continuos functions $C(\mathbf{T})$,
indexed by integer $n$, that assign to every $f\in C(\mathbf{T})$ a polynomial
of degree $n$. These family of operators has the following properties%
\begin{align}
f(\mathbf{x}) &  \equiv1~\Rightarrow~B_{n}(f;\mathbf{x})\equiv1\label{conv1}\\
f(\mathbf{x}) &  >g(\mathbf{x})~\Rightarrow~B_{n}(f;\mathbf{x})>B_{n}%
(f;\mathbf{x})\label{conv2}%
\end{align}
The property (\ref{conv1}) results from Eqs. (\ref{b2}, \ref{b6},
\ref{Newton}, \ref{b8.1}), while Eq.(\ref{conv2}) result from (\ref{b6}) (the
B\'{e}zier polynomials are positive) and Eq.(\ref{b8.1}). From Eqs.
(\ref{conv1}, \ref{conv2}) results that
\begin{align}
\left\Vert f\right\Vert  &  :=\underset{\mathbf{x\in T}}{\sup}\left\vert
f(\mathbf{x})\right\vert \leq A~\Rightarrow\nonumber\\
\left\Vert B_{n}(f;\mathbf{.})\right\Vert  &  :=\underset{\mathbf{x\in T}%
}{\sup}\left\vert B_{n}(f;\mathbf{x})\right\vert \leq A\label{conv4}%
\end{align}
The last relation means that the operator $\widehat{B}_{n}$ defined by
Eq.(\ref{firstDefBhat}) on $C(\mathbf{T})$ is a bounded linear operator, whose
norm is less or equal to $1$, and by Eq.(\ref{conv1}) its norm is $1$,
attained by constant functions:
\begin{align*}
\left\Vert \widehat{B}_{n}\right\Vert  &  =1\\
\left\Vert \widehat{B}_{n}f\right\Vert  &  \leq\left\Vert f\right\Vert
\end{align*}
Consequently
\begin{equation}
f\in C(\mathbf{T})~~\Rightarrow\left\Vert (\widehat{B}_{n}-\widehat
{1})f\right\Vert \leq2~\left\Vert f\right\Vert \label{boundBhat}%
\end{equation}
where $\widehat{1}$ is the unit operator on $C(\mathbf{T})$.\ Now we can state
and prove our main result :

\begin{theorem}
For all $f\in C(\mathbf{T})$ we have%
\begin{equation}
\underset{n\rightarrow\infty}{\lim}\underset{\mathbf{x\in T}}{\sup}\left\vert
f(\mathbf{x})-B_{n}(f;\mathbf{x})\right\vert =0 \label{theorem1}%
\end{equation}
which means that the polynomials $B_{n}(f;\mathbf{x})$ formed from B\'{e}zier
polynomials approximate uniformly the function $f\in C(\mathbf{T})$. In term
of linear operator notation the equivalent form is%
\begin{equation}
\underset{n\rightarrow\infty}{\lim}\left\Vert \widehat{B}_{n}f-f\right\Vert =0
\label{theorem2}%
\end{equation}

\end{theorem}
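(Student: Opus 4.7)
The plan is to use the three ingredients that the excerpt has carefully prepared: density of exponential polynomials in $C(\mathbf{T})$ (Lemma \ref{marker lemma density exponents}), uniform convergence of $\widehat{B}_n P$ to $P$ for every fixed exponential polynomial $P$ (Corollary \ref{markerCorolaryUnifConvergence}, inequality (\ref{expoaproxineg})), and the uniform operator bound $\|\widehat{B}_n\|=1$ obtained from the positivity and unit-preservation properties (\ref{conv1})--(\ref{conv4}). The whole argument is the standard three-$\varepsilon$ triangle inequality.

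Fix $f\in C(\mathbf{T})$ and $\varepsilon>0$. First, by Lemma \ref{marker lemma density exponents}, I pick an exponential polynomial $P(\mathbf{x})=\sum_{i=1}^{N(\varepsilon)} c_i^{\varepsilon}\exp(\mathbf{a}_i^{\varepsilon}\cdot\mathbf{x})$ satisfying $\|f-P\|\le\varepsilon/4$; the threshold $\varepsilon/4$ (rather than the $\varepsilon/2$ of (\ref{b11.2})) is chosen so that the three error contributions below sum to at most $\varepsilon$. Second, for this particular (now fixed) $P$, Corollary \ref{markerCorolaryUnifConvergence} furnishes an index $N=N(P,\varepsilon)$ such that $\|\widehat{B}_n P-P\|\le\varepsilon/4$ for every $n\ge N$. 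Third, I insert $\widehat{B}_n P$ and $P$ by the triangle inequality:
\begin{equation*}
\|\widehat{B}_n f-f\|\le\|\widehat{B}_n(f-P)\|+\|\widehat{B}_n P-P\|+\|P-f\|.
\end{equation*}
The middle term is $\le\varepsilon/4$ by the choice of $N$, the last is $\le\varepsilon/4$ by the choice of $P$, and for the first term the operator bound $\|\widehat{B}_n\|=1$ yields $\|\widehat{B}_n(f-P)\|\le\|f-P\|\le\varepsilon/4$. Summing, $\|\widehat{B}_n f-f\|\le 3\varepsilon/4<\varepsilon$ for all $n\ge N$, which is exactly (\ref{theorem2}); expanding the norm gives (\ref{theorem1}).

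There is essentially no genuine obstacle: all of the analytic content, namely Taylor expansion of $\exp$, the Stone--Weierstrass application, and the positivity-based operator norm, has been discharged upstream. The only point that needs explicit mention is that the bound on $\|\widehat{B}_n\|$ is \emph{uniform in} $n$, for otherwise the first term of the triangle inequality could blow up as $n\to\infty$ and the argument would collapse; this uniformity is precisely what (\ref{conv1})--(\ref{boundBhat}) provide. A secondary stylistic concern is the order of quantifiers: $P$ must be chosen before $N$, since the corollary's threshold $N(P,\varepsilon)$ depends on the particular exponents $\mathbf{a}_i^{\varepsilon}$ appearing in $P$. Once the quantifier order is respected the proof is a one-line application of the triangle inequality.
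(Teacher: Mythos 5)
Your proof is correct and follows essentially the same route as the paper: approximate $f$ by an exponential polynomial $P$ via the Stone--Weierstrass density lemma, invoke the corollary's convergence $\widehat{B}_nP\to P$, and control the remaining term by the uniform operator bound coming from positivity and unit preservation. The only cosmetic difference is that you split the triangle inequality into three terms using $\Vert\widehat{B}_n\Vert=1$, whereas the paper groups two of them via $\Vert\widehat{B}_n-\widehat{1}\Vert\leq2$; the content is identical.
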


\begin{proof}
Let $f\in C(\mathbf{T})$ \ and $\varepsilon>0$.$\ $Then by Lemma
(\ref{marker lemma density exponents}) there exists an exponential polynomial
$P_{\varepsilon}(\mathbf{x)}\in E(\mathbf{T})$ of the form
\[
\sum\limits_{i=1}^{N}c_{i}\exp(\mathbf{a}_{i}\cdot\mathbf{x})
\]
such that%
\begin{equation}
\left\Vert f-P_{\varepsilon}\right\Vert \leq\frac{\varepsilon}{4} \label{ff1}%
\end{equation}
By Corollary \ref{markerCorolaryUnifConvergence} and Eq. (\ref{expoaproxineg}%
), there exists an $N:=N(P_{\varepsilon},\varepsilon)$ such that for all
$n\geq N(P_{\varepsilon},\varepsilon)$%
\begin{equation}
\left\Vert (\widehat{B}_{n}-\widehat{1})P_{\varepsilon}\right\Vert \leq
\frac{\varepsilon}{2} \label{ff2}%
\end{equation}
By using the triangle inequality for norm and Eqs. (\ref{boundBhat},
\ref{ff1}, \ \ref{ff2}) we obtain%
\begin{align*}
\left\Vert (\widehat{B}_{n}-\widehat{1})f\right\Vert  &  =\left\Vert
(\widehat{B}_{n}-\widehat{1})\left(  f-P_{\varepsilon}\right)  +(\widehat
{B}_{n}-\widehat{1})P_{\varepsilon}\right\Vert \leq\\
\left\Vert (\widehat{B}_{n}-\widehat{1})\left(  f-P_{\varepsilon}\right)
\right\Vert +\left\Vert (\widehat{B}_{n}-\widehat{1})P_{\varepsilon
}\right\Vert  &  \leq\left\Vert (\widehat{B}_{n}-\widehat{1})\right\Vert
\left\Vert \left(  f-P_{\varepsilon}\right)  \right\Vert +\frac{\varepsilon
}{2}<\varepsilon
\end{align*}
that completes the proof.\ 
\end{proof}

\section{Conclusions}

We presented a new proof of the convergence of the B\'{e}zier polynomials
attached to a simplex in $D$ dimensions. We proved that the relative error is
dominated by an asymptotic term of the form $O(dk/n)$ where $n$ is the order
of the B\'{e}zier polynomial, $d$ is the largest side of the simplex and $k$
is them largest wave number that appear in the Fourier expansion of the
function to be approximated.

\textbf{Acknowledgement}

This work has been carried out within the framework of the EUROfusion
Consortium as a complementary project and has been received funding from the
Romanian National Education Minister / Institute of Atomic Physics under
contract 1EU-2/2//01.07.2016. The collaboration with the C\u{a}lin Vlad
Atanasiu is acknowledged.

\end{document}